\newtheorem{thm}{Theorem}
\newtheorem{lem}[thm]{Lemma}
\theoremstyle{definition}
\theoremstyle{definition}
\begin{document}

\title[Generalized Black-Scholes Equations]{Local Well-Posedness for Generalized Seminlinear Black-Scholes Equations}

\author[D. da Silva]{Daniel Oliveira da Silva}
\address{Department of Mathematics \\
Nazarbayev University \\
Kabanbay Batyr Avenue 53 \\
010000 Astana \\
Republic of Kazakhstan}
\email{daniel.dasilva@nu.edu.kz}

\author[K. Igibayeva]{Kamilla Igibayeva}
\address{Department of Mathematics \\
Nazarbayev University \\
Kabanbay Batyr Avenue 53 \\
010000 Astana \\
Republic of Kazakhstan}
\email{kamilla.igibayeva@nu.edu.kz}

\author[A. Khoroshevskaya]{Adelina Khoroshevskaya}
\address{Department of Mathematics \\
Nazarbayev University \\
Kabanbay Batyr Avenue 53 \\
010000 Astana \\
Republic of Kazakhstan}
\email{adelina.khoroshevskaya@nu.edu.kz}

\author[Zh. Sakayeva]{Zhanna Sakayeva}
\address{Department of Mathematics \\
Nazarbayev University \\
Kabanbay Batyr Avenue 53 \\
010000 Astana \\
Republic of Kazakhstan}
\email{zhanna.sakayeva@nu.edu.kz}

\keywords{Black-Scholes; well-posedness; parabolic equations}
\subjclass[2000]{35K55, 91G80}

\begin{abstract}
We consider some parabolic equations which are model problems for a variety of nonlinear generalizations to the Black-Scholes equation of mathematical finance.  In particular, we prove local well-posedness for the Cauchy problem with initial data in $L^{p}$ spaces.
\end{abstract}

\maketitle

%%% Introduction %%%%%%%%%%%%%%%%%%%%%%%%%%%%%%%%%%%

\section{Introduction}

The original Black-Scholes equation, proposed by Black and Scholes in \cite{BS1973}, is the linear parabolic partial differential equation
\begin{equation}\label{bsequation}
V_{\tau} + \frac{\sigma^2}{2} S^2 V_{SS} + r S V_{S} - r V = 0.
\end{equation}
Here, $V(S,\tau): [0,\infty) \times [0,T] \longrightarrow \mathbb{R}$ is function which represents the monetary value of an \emph{option}, which is a contract to buy or sell an asset at a specified price, called the \emph{exercise price}, at a future time $T$.  The value $V$ is assumed to depend on the current time $\tau \leq T$, and the current price $S \geq 0$ of the underlying asset.  The constant $\sigma > 0$ represents the \emph{volatility}, which is a measure of the fluctuation of the value of the asset, while $r$ represents an interest rate for a bank account, in which investors could deposit their money with a guaranteed rate of return instead of buying the option.  This equation has become an important tool in the field of financial modeling, and more recently, the insurance industry \cite{O2014}.

In recent years, more sophisticated models of options pricing have appeared which have led to nonlinear generalizations of equation \eqref{bsequation}.  Some relevant examples are:
\begin{itemize}
\item The Barles-Soner Model \cite{BS1998}:
\[
V_{\tau} + \frac{\sigma^2}{2} S^2 V_{SS} + r S V_{S} - r V = \frac{\sigma^2}{2} S^2 V_{SS} G\left( e^{r(T-\tau)} a^2 S^2 V_{SS} \right),
\]
where $a$ is a parameter and $G$ is a function which is a solution to a certain ordinary differential equation.

\item The risk-adjusted Black-Scholes equation (RABS) \cite{K1998,JS2005}:
\begin{equation}\label{RABS}
V_{\tau} + \frac{\sigma^2}{2} S^2 \left( 1 - \mu |S V_{SS}|^{-2/3}V_{SS} \right) V_{SS} + r S V_{S} - r V = 0,
\end{equation}
with $\mu$ a parameter.

\item The \c Cetin-Jarrow-Protter (CJP) model \cite{CJP2004,FP2011}:
\begin{equation}\label{CJP}
V_{\tau} + \frac{\sigma^2}{2} S^2 V_{SS} \left( 1 + 2 \rho S V_{SS} \right) = 0,
\end{equation}
where $\rho$ is a parameter.

\item The Platen-Schweizer (PS) model \cite{PS1998,FP2011}:
\[
V_{\tau} + \frac{\sigma^2}{2}\frac{S^2}{(1-\rho S V_{SS})^2}V_{SS} = 0.
\]
\end{itemize}
These equations are fully nonlinear equations, which can often be quite difficult to study.

Due to the difficulty in analyzing fully nonlinear partial differential equations, many authors have proposed semilinear models, which are significantly easier to study.  Some examples of such semilinear models can be found in \cite{KW2009,LS2007,BKR2004,BKR2003, K1997}.  Since the appearance of these results, some authors have begun considering generalized Black-Scholes models.  One example of interest to us is the generalized model of Sowrirajan and Balachandran \cite{SB2010}, who considered generalized models of the form
\[
V_{\tau} + \frac{\sigma^2}{2} S^2 V_{SS} + r S V_{S} - r V = F(V)
\]
where $F$ is a Lipschitz function.  In the present work, we will first consider the problem studied by Sowrirajan and Balachandran, for which we will show local well-posedness for $L^{p}$ data by a simpler method involving a fixed-point argument.  Then, by applying the same methods, we will prove local well-posedness for more generalized Black-Scholes models which are based on the results listed above.  More precisely, we will first study the equation
\begin{equation}\label{equation1}
u_{\tau} + A S^{2} u_{SS} + B S u_{S} + C u = F_{1}(u)
\end{equation}
with constant coefficients $A,B,C \in \mathbb{R}$, and with nonlinearities $F$ which are Lipschitz functions.  

To motivate the second equation we will study, recall the risk-adjusted Black-Scholes equation \eqref{RABS}, and the \c Cetin-Jarrow-Protter model from equation \eqref{CJP}.  By introducing the new function $u = S V_{SS}$ as was done in \cite{JS2005}, it is easy to see that equations \eqref{RABS} and \eqref{CJP}  can be rewritten in a modified Black-Scholes form as
\[\begin{aligned}
& u_{\tau} + A_{1} S^{2}u_{SS} + B_{1} S u_{S} - B_{1} u = D_{1} S(u^{4/3})_{S} + E_{1} S^{2} (u^{4/3})_{SS}, \\
& u_{\tau} + A_{2} S^{2}u_{SS} + B_{2} S u_{S} - C_{2} u + D_{2} S (u^2)_{S} + E_{2} S^2 (u^{2})_{SS} = 0,
\end{aligned}\]
respectively, for appropriate constants $A_{i}, B_{i}, C_{i}, D_{i}$, and $E_{i}$.  Note that the coefficients on the left side of each equation differ from those in the original Black-Scholes equation \eqref{bsequation}.  Regardless of this, these two equations have a similar structure, namely in that they are both of the form
\[
u_{\tau} + A S^{2} u_{SS} + B S u_{SS} + C u = D S\left( F(u) \right)_{S} + E S^{2} \left( F(u) \right)_{SS}
\]
for some coefficients $A,B,C,D,E \in \mathbb{R}$ and where $F(u) = u^{p}$ for some exponent $p > 1$.  The presence of the second derivative in the right-hand side makes these equations quasilinear.  If we assume that $E = 0$ we then obtain the semilinear equation
\begin{equation}\label{equation2}
u_{\tau} + A S^{2} u_{SS} + B S u_{S} + C u = D S\left( F(u) \right)_{S}.
\end{equation}
This is the second equation which will be studied below.

To formulate our main results, we first rewrite equations \eqref{equation1} and \eqref{equation2} as nonlinear heat equations.  To do this, we use a standard change of variables which is used in the study of Black-Scholes equations.  Let
\[
x = \ln S, \quad \textrm{and} \quad t = A(T - \tau).
\]
A simple computation will show that, if $A \neq 0$, we may rewrite equations \eqref{equation1} and \eqref{equation2} as
\[\begin{aligned}
& u_{t} - u_{xx} = A_1 u_{x} + A_2 F_{1}(u), \\
& u_{t} - u_{xx} = B_1 u + B_2 (F_{2}(u))_{x},
\end{aligned}\]
respectively.  Note that the variable $x$ can now be negative, in contrast to the variable $S$.  In addition, the constants $A_i$ and $B_i$ are largely irrelevant to the analysis.  We will thus set them all to be equal to 1.

It should be noted that in applications of Black-Scholes equations, the interest is primarily in the final value problem; that is, we want to find a function $u(S,\tau)$ such that $u(S,T) = f(S)$ for some specified function $f$.  The change of variables above not only converts the equation into a heat-type equation, but it also converts the final value problem into an initial value problem, for which many standard tools apply.  Thus, the above change of variables for $\tau$ is not just an artificial change which puts the equation in a standard form.  With this observation in mind, we may now state the main results to be proved.

\begin{thm}\label{mainthm1}
Let $F_{1}: \mathbb{R} \longrightarrow \mathbb{R}$ be a Lipschitz function.  Then the Cauchy problem
\begin{equation}\label{maineq1}\begin{aligned}
& u_{t} - u_{xx} = u_{x} + F_{1}(u) \\
& u(x,0) = f (x)
\end{aligned}\end{equation}
is locally well-posed in $L^{p}$ for any $p \geq 1$.  That is, for all initial conditions $f \in L^{p}$, there is a $T > 0$ such that there exists a unique solution $u \in C([0,T]; L^{p})$ to equation \eqref{maineq1}.  Moreover, the solution map $f \mapsto u$ is continuous.
\end{thm}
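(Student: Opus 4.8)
The plan is to prove local well-posedness via the standard Duhamel/fixed-point framework, exploiting that the linear part is essentially the heat equation. The first step is to reduce \eqref{maineq1} to its integral (mild) formulation. Writing the heat semigroup as $e^{t\partial_{xx}}$ with kernel $G_t(x) = (4\pi t)^{-1/2}e^{-x^2/(4t)}$, a mild solution satisfies
\begin{equation}\label{duhamel}
u(t) = e^{t\partial_{xx}} f + \int_0^t e^{(t-s)\partial_{xx}}\bigl( u_x(s) + F_1(u(s)) \bigr)\, ds.
\end{equation}
I would then set up a contraction mapping on the Banach space $X_T = C([0,T]; L^p)$, defining $\Phi(u)$ to be the right-hand side of \eqref{duhamel}, and seek a fixed point in a suitable closed ball of $X_T$.

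The analytic core is controlling the two Duhamel terms in $L^p$. For the nonlinearity $F_1(u)$, the Lipschitz hypothesis is exactly what is needed: since $|F_1(a) - F_1(b)| \leq L|a-b|$ and $|F_1(a)| \leq |F_1(0)| + L|a|$, the map $u \mapsto F_1(u)$ is Lipschitz from $L^p$ to $L^p$ (on balls, accounting for the constant term $F_1(0)$, which lies in $L^p$ only locally but can be absorbed by working on a finite time interval and bounded domain, or by noting the term contributes a fixed $L^p$ function after integration). Combined with the contraction estimate $\|e^{t\partial_{xx}} g\|_{L^p} \leq \|g\|_{L^p}$ from Young's inequality (the heat kernel has unit $L^1$ mass), the nonlinear term yields a factor of $T$ after time integration, giving smallness for short times. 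The derivative term $u_x$ is the more delicate piece: it requires the smoothing estimate $\|\partial_x e^{t\partial_{xx}} g\|_{L^p} \leq C t^{-1/2} \|g\|_{L^p}$, which follows from Young's inequality applied to $\partial_x G_t$, whose $L^1$ norm scales like $t^{-1/2}$. Integrating $\int_0^t (t-s)^{-1/2}\, ds = 2t^{1/2}$ converges, so this term also contributes a factor that is small as $T \to 0$.

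With both estimates in hand, I would show that for $T$ small enough (depending only on $\|f\|_{L^p}$ and the Lipschitz constant $L$), the map $\Phi$ sends a closed ball of radius $R \sim 2\|f\|_{L^p}$ into itself and is a contraction there. The Banach fixed-point theorem then yields a unique mild solution $u \in C([0,T]; L^p)$. Uniqueness in the whole space (not merely the ball) follows by a standard Gronwall argument comparing two solutions. Continuous dependence on the data, $f \mapsto u$, is obtained by subtracting the Duhamel formulas for two initial data $f_1, f_2$ and applying the same Lipschitz and smoothing estimates to bound $\|u_1 - u_2\|_{X_T} \lesssim \|f_1 - f_2\|_{L^p}$, again on a possibly shortened common time interval.

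The main obstacle I anticipate is the interplay between the half-power singularity $t^{-1/2}$ from the derivative term and the endpoint case $p = 1$, where some functional-analytic tools (such as boundedness of Riesz transforms) fail; here one must rely strictly on the convolution/Young's inequality approach rather than any $L^p$-multiplier theory, which fortunately works for all $p \geq 1$ since it uses only the explicit $L^1$ bounds on $G_t$ and $\partial_x G_t$. A secondary technical point is the constant term $F_1(0)$ in the Lipschitz nonlinearity: because constants are not in $L^p(\mathbb{R})$, I would handle this by writing $F_1(u) = (F_1(u) - F_1(0)) + F_1(0)$ and treating the constant contribution separately, noting that $\int_0^t e^{(t-s)\partial_{xx}} F_1(0)\, ds$ is controlled in $L^p$ on any bounded time interval through the conservation of the constant by the heat semigroup, or by restricting attention to the genuinely Lipschitz (vanishing at zero) part after an elementary reduction.
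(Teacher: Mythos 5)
Your overall strategy is exactly the paper's: both rewrite \eqref{maineq1} in mild (Duhamel) form, run a contraction argument in $C([0,T];L^p)$, estimate the semigroup term by Young's inequality (the heat kernel has unit $L^1$ mass), and handle the $u_x$ term by moving the derivative onto the kernel, whose $L^1$ norm scales like $t^{-1/2}$, so that the two Duhamel contributions produce the small factors $T$ and $T^{1/2}$. One minor difference: since $F_1$ is globally Lipschitz, the paper obtains a contraction on all of $C([0,T];L^p)$ with $T$ depending only on the Lipschitz constant; your closed-ball setup with $R \sim 2\|f\|_{L^p}$ and a separate Gronwall uniqueness step is harmless but unnecessary.

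The genuine gap is your treatment of the constant $F_1(0)$. You correctly notice that if $F_1(0) \neq 0$ then $u \mapsto F_1(u)$ does not map $L^p(\mathbb{R})$ into itself, but both of your proposed repairs fail, and for the same reason: the heat semigroup preserves constants. Writing $F_1(u) = \bigl(F_1(u)-F_1(0)\bigr) + F_1(0)$, the constant part contributes
\[
\int_0^t e^{(t-s)\partial_{xx}} F_1(0)\, ds = t\, F_1(0),
\]
a nonzero spatial constant, which does not belong to $L^p(\mathbb{R})$ for any finite $p$, on any time interval however short. So this term is not ``a fixed $L^p$ function after integration'' and is not ``controlled in $L^p$ through the conservation of the constant'' --- that conservation is precisely the problem. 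It shows instead that when $F_1(0)\neq 0$ the fixed-point equation has no solution in $C([0,T];L^p)$ at all: if $u(t) \in L^p$, the right-hand side of the Duhamel formula is $L^p$ plus the non-$L^p$ function $tF_1(0)$, a contradiction. Nor is there an ``elementary reduction'' to the vanishing-at-zero part that stays inside $C([0,T];L^p)$; substituting $v = u - tF_1(0)$ merely transfers the defect, since then $u \in L^p$ forces $v \notin L^p$. The only correct resolution within this framework is to impose the additional hypothesis $F_1(0)=0$, which is exactly what the paper does --- silently --- at the start of its proof (``for some Lipschitz function $F_1$ with $F_1(0)=0$''), even though that hypothesis is absent from the statement of the theorem. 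Your instinct that something must be said here is sound, and it in fact exposes a mismatch between the paper's theorem and its proof; but the fixes you sketch do not close the gap, and no fix can, short of strengthening the hypothesis or changing the solution space.
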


\begin{thm}\label{mainthm2}
Let $F_{2}: \mathbb{R} \longrightarrow \mathbb{R}$ be a Lipschitz function.  Then the Cauchy problem
\begin{equation}\label{maineq2}\begin{aligned}
& u_{t} - u_{xx} = u + (F_{2}(u))_{x} \\
& u(x,0) = f (x)
\end{aligned}\end{equation}
is locally well-posed in $L^{p}$ for any $p \geq 1$.  That is, for all initial conditions $f \in L^{p}$, there is a $T > 0$ such that there exists a unique solution $u \in C([0,T]; L^{p})$ to equation \eqref{maineq2}.  Moreover, the solution map $f \mapsto u$ is continuous.
\end{thm}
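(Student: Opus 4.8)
The plan is to recast equation \eqref{maineq2} in its mild (Duhamel) form and to solve it by a contraction mapping argument, exactly as in the proof of Theorem \ref{mainthm1}; the only genuinely new ingredient is the spatial derivative falling on the nonlinearity. Writing $e^{t\partial_{xx}}$ for the heat semigroup on $\mathbb{R}$ (convolution with the Gaussian kernel $G_t$), a function $u \in C([0,T];L^p)$ is a mild solution of \eqref{maineq2} precisely when it is a fixed point of
\[
\Phi(u)(t) = e^{t\partial_{xx}} f + \int_0^t e^{(t-s)\partial_{xx}}\, u(s)\, ds + \int_0^t \partial_x e^{(t-s)\partial_{xx}}\, F_2(u(s))\, ds,
\]
where in the last term I have used that $\partial_x$ commutes with the semigroup to move the derivative off of $F_2(u)$ and onto the kernel.

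The two estimates I would rely on are the $L^p$-contractivity of the heat semigroup, $\|e^{t\partial_{xx}}g\|_{L^p} \le \|g\|_{L^p}$, which follows from Young's inequality and $\|G_t\|_{L^1}=1$, and the smoothing estimate
\[
\|\partial_x e^{t\partial_{xx}} g\|_{L^p} \le \|\partial_x G_t\|_{L^1}\,\|g\|_{L^p} \le C\, t^{-1/2}\|g\|_{L^p},
\]
which again follows from Young's inequality together with the scaling identity $\|\partial_x G_t\|_{L^1} = C\,t^{-1/2}$. The Lipschitz hypothesis on $F_2$, together with the observation that only $(F_2(u))_x$ appears, so that we may replace $F_2$ by $F_2 - F_2(0)$ and hence assume $F_2(0)=0$, gives $\|F_2(u)\|_{L^p} \le L\|u\|_{L^p}$ and $\|F_2(u)-F_2(v)\|_{L^p}\le L\|u-v\|_{L^p}$; thus $F_2$ maps $L^p$ into $L^p$ and is Lipschitz there.

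With these in hand, I would fix $R = 2\|f\|_{L^p}$ and work in the closed ball $B_R = \{u \in C([0,T];L^p) : \sup_{t\in[0,T]}\|u(t)\|_{L^p}\le R\}$. Applying the three estimates termwise and using $\int_0^t (t-s)^{-1/2}\,ds = 2\sqrt{t}\le 2\sqrt{T}$, I would obtain both the self-map bound $\sup_{t}\|\Phi(u)(t)\|_{L^p} \le \|f\|_{L^p} + (T + 2CL\sqrt{T})\,R$ and the difference bound
\[
\sup_{t\in[0,T]}\|\Phi(u)(t)-\Phi(v)(t)\|_{L^p} \le \big(T + 2CL\sqrt{T}\big)\sup_{t\in[0,T]}\|u(t)-v(t)\|_{L^p}.
\]
Choosing $T$ small enough that $T + 2CL\sqrt{T}\le \tfrac12$ makes $\Phi$ both map $B_R$ into itself and contract it, so the Banach fixed-point theorem produces a unique fixed point $u\in B_R$, the desired mild solution. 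Uniqueness in all of $C([0,T];L^p)$ and the Lipschitz continuity of the solution map $f\mapsto u$ then follow by feeding two solutions, with data $f$ and $g$, into the same difference estimate and absorbing the contraction factor, which yields $\|u-v\|\le 2\|f-g\|_{L^p}$.

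The main obstacle, and the one new feature relative to Theorem \ref{mainthm1}, is the derivative in $(F_2(u))_x$: it cannot be controlled by the contractivity of the semigroup alone and forces the use of the smoothing bound, which introduces the factor $(t-s)^{-1/2}$ in the Duhamel integral. The point that makes everything close is that this singularity is integrable in $s$, since the exponent $\tfrac12$ is strictly less than $1$, so the time integral contributes only the factor $2\sqrt{T}$, which is small for small $T$ and is exactly what is needed to obtain the contraction. I would expect the only care required to lie in pinning down the scaling constant in $\|\partial_x G_t\|_{L^1}$ and in verifying continuity in time $t\mapsto u(t)\in L^p$ up to $t=0$, both of which are routine.
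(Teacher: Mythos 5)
Your proposal is correct and takes essentially the same route as the paper: the mild (Duhamel) formulation, moving the spatial derivative off of $F_2(u)$ and onto the heat kernel so that the bound $\| \Phi_x(\cdot,t)\|_{L^1} \leq C t^{-1/2}$ gives an integrable singularity and hence a factor $T^{1/2}$, and then a contraction mapping argument in $C([0,T];L^p)$ with $T$ small. The only differences are cosmetic: you contract on a ball $B_R$ rather than on the whole space (unnecessary here since the nonlinearity is globally Lipschitz, but harmless), and your explicit reduction to $F_2(0)=0$ via $F_2 \mapsto F_2 - F_2(0)$ cleanly justifies that $F_2(u) \in L^p$, a point the paper leaves implicit for this theorem.
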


\noindent The proofs of both of these theorems will involve a standard fixed-point argument.  In section \ref{prel}, we will introduce the necessary tools for the proofs.  Then, in sections \ref{proof1} and \ref{proof2}, we will use the results in section \ref{prel} to prove Theorems \ref{mainthm1} and \ref{mainthm2}, respectively.

%%%%%%%% Preliminaries %%%%%%%%%%%%%%%%%%%%%%%%%%%%%%%%%%%%%%%%%%%

\section{Preliminaries}\label{prel}

To set up the argument, we first recall some basic facts about the linear heat equation.  Consider the linear Cauchy problem
\[\begin{aligned}
& u_{t} - u_{xx} = G(x,t), \\
& u(0) = f.
\end{aligned}\]
for $x \in \mathbb{R}$, $t \geq 0$.  The solution to this problem is given by the well-known formula
\[
u(x,t) = \int_{\mathbb{R}} \Phi(x-y,t) f(y)\ dy + \int_{0}^{t} \int_{\mathbb{R}} \Phi(x-y, t-s) G(y,s)\ dy ds,
\]
where the function $\Phi(x,t)$ is the so-called \emph{heat kernel}, given by
\[
\Phi(x,t) = \begin{cases}
\displaystyle \frac{1}{\sqrt{4 \pi t}} e^{-x^2/4t} & \textrm{if } t > 0, \\
0 & \textrm{otherwise}.
\end{cases}
\]
To apply the solution formula above to a nonlinear problem of the form
\begin{equation}\label{gennlprob}\begin{aligned}
& u_{t} - u_{xx} = G(u,u_{x}), \\
& u(0) = f,
\end{aligned}\end{equation}
define a function $\Psi$ on $C([0,T];L^{p})$ by
\[
\Psi(u) = \int_{\mathbb{R}} \Phi(x-y,t) f(y)\ dy + \int_{0}^{t} \int_{\mathbb{R}} \Phi(x-y, t-s) G(y,s)\ dy ds,
\]
where $G(y,s) = G(u(y,s),u_{x}(y,s))$ here.  It is easy to see that the function $w = \Psi(u)$ satisfies
\[\begin{aligned}
& w_{t} - w_{xx} = G(u,u_{x}), \\
& w(0) = f,
\end{aligned}\]
If we can prove the existence of a fixed point $u$ for the operator $\Psi$, then this fixed point will satisfy the nonlinear problem in equation \eqref{gennlprob}.

The existence of such fixed points will follow from an application of the contraction mapping theorem.  Thus, we need only prove that $\Psi$ is a contraction on $C([0,T]; L^{p})$.  For this, we will need the following estimates on the heat kernel $\Phi$:
\begin{lem}\label{heatkernestimates}
The heat kernel $\Phi(x,t)$ satisfies the identities
\begin{itemize}
\item $\displaystyle \| \Phi(\cdot, t) \|_{L^{p}} = \frac{C_{p}}{(4 \pi t)^{\frac{1}{2}(1-1/p)}}$

\item $ \displaystyle \| \Phi_{x}(\cdot, t) \|_{L^{p}} = \frac{C_{p}}{(4 \pi t)^{1 - 1/2p}}$
\end{itemize}
for some constants $C_{p}$ and all $t > 0$.
\end{lem}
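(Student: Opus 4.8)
The plan is to verify both identities by direct computation, since the heat kernel is an explicit Gaussian and every integral that arises is standard. Throughout I assume $t > 0$, which is what makes the Gaussian integrable.

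For the first identity, I would raise the kernel to the $p$-th power, pulling the $t$-dependent normalization $(4\pi t)^{-1/2}$ out of the integral. This leaves $\int_{\mathbb{R}} e^{-px^2/4t}\,dx$, which equals $\sqrt{4\pi t/p}$ by the elementary formula $\int_{\mathbb{R}} e^{-ax^2}\,dx = \sqrt{\pi/a}$ with $a = p/4t$. Combining these factors and taking the $p$-th root yields a prefactor $(4\pi t)^{1/(2p) - 1/2}$ together with a purely $p$-dependent constant. Since $\frac{1}{2p} - \frac{1}{2} = -\tfrac{1}{2}\bigl(1 - \tfrac{1}{p}\bigr)$, this is exactly the claimed form, with $C_p = p^{-1/(2p)}$.

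For the second identity, I would first compute $\Phi_x(x,t) = -\frac{x}{2t}(4\pi t)^{-1/2} e^{-x^2/4t}$, so that $|\Phi_x|^p$ carries a factor $|x|^p$ against the Gaussian weight. The resulting moment integral $\int_{\mathbb{R}} |x|^p e^{-px^2/4t}\,dx$ is evaluated through the substitution $u = px^2/4t$, which reduces it to the definition of the Gamma function and gives $\Gamma\bigl(\tfrac{p+1}{2}\bigr)(4t/p)^{(p+1)/2}$. Assembling the powers of $t$ — namely $t^{-p}$ from $(2t)^{-p}$, $t^{-p/2}$ from the normalization, and $t^{(p+1)/2}$ from the integral — produces an overall power $t^{1/2 - p}$. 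Taking the $p$-th root then gives $(4\pi t)^{-(1 - 1/2p)}$ up to a $p$-dependent constant, matching the statement.

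The argument is entirely computational, so there is no serious conceptual obstacle; the only steps demanding care are the bookkeeping of the exponents of $t$ and $4\pi$ after taking the $p$-th root, and the evaluation of the moment integral in the second part via the Gamma function. I would also note that the constant $C_p$ need not be literally identical in the two lines — what the subsequent fixed-point argument actually uses is only the exhibited power of $t$ — so I would track the $p$-dependent constants explicitly and absorb them into $C_p$ at the end.
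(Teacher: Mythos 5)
Your computation is correct and takes exactly the approach the paper intends: the paper's proof consists of the single remark that the identities ``can be proved by direct integration,'' and your Gaussian integral for $\|\Phi(\cdot,t)\|_{L^p}$ and Gamma-function evaluation of the moment integral for $\|\Phi_x(\cdot,t)\|_{L^p}$ carry out precisely that integration, with the exponents $-\tfrac{1}{2}(1-1/p)$ and $-(1-\tfrac{1}{2p})$ coming out as claimed. Your closing observation --- that the two constants $C_p$ differ and that only the powers of $t$ matter for the later fixed-point argument --- is accurate and consistent with how the lemma is used in Lemma 2 of the paper.
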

\noindent These can be proved by direct integration.

Next, we need the following estimates for the linear and nonlinear portion of the solutions:
\begin{lem}\label{solest}
Let $f \in L^{p}$ and $G \in C([0,T];L^{p}(\mathbb{R}))$.  Then the following estimates hold:
\begin{equation}\label{linest}
\left\| \int_{\mathbb{R}} \Phi(x-y,t) f(y)\ dy \right\|_{L^{\infty}_{t}L^{p}_{x}} \leq \| f \|_{L^{p}}
\end{equation}
\begin{equation}\label{nlest1}
\left\| \int_{0}^{t} \int_{\mathbb{R}} \Phi(x-y,t-s) G(y,s)\ dyds \right\|_{L^{\infty}_{t}L^{p}_{x}} \leq C T \| G \|_{L^{\infty}_{t}L^{p}_{x}}
\end{equation}
for some constant $C > 0$.  If we further assume that $G$ is differentiable, then
\begin{equation}\label{nlest2}
\left\| \int_{0}^{t} \int_{\mathbb{R}} \Phi(x-y,t-s) G_{x}(y,s)\ dyds \right\|_{L^{\infty}_{t}L^{p}_{x}} \leq C T^{1/2}\| G \|_{L^{\infty}_{t}L^{p}_{x}}
\end{equation}
for some constant $C > 0$.
\end{lem}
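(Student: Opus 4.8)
The plan is to treat each integral as a spatial convolution and combine Young's convolution inequality with the kernel bounds from Lemma \ref{heatkernestimates}. For the linear estimate \eqref{linest}, I would observe that the integral is exactly the convolution $\Phi(\cdot,t) * f$. Convolving the $L^1$ kernel against the $L^p$ function, Young's inequality gives $\| \Phi(\cdot,t) * f \|_{L^p} \leq \| \Phi(\cdot,t) \|_{L^1} \| f \|_{L^p}$; since $\Phi$ is a nonnegative probability density, $\| \Phi(\cdot,t) \|_{L^1} = 1$ for all $t > 0$ (consistent with the $p=1$ case of Lemma \ref{heatkernestimates}). Taking the supremum over $t \in [0,T]$ yields \eqref{linest} with no loss.

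For the first nonlinear estimate \eqref{nlest1}, I would write the Duhamel term as $\int_0^t \bigl( \Phi(\cdot,t-s) * G(\cdot,s) \bigr)\, ds$, move the $L^p_x$ norm inside the time integral by Minkowski's integral inequality, and apply Young's inequality slicewise. This reduces the bound to $\int_0^t \| \Phi(\cdot,t-s) \|_{L^1} \| G(\cdot,s) \|_{L^p}\, ds \leq \int_0^t \| G \|_{L^\infty_t L^p_x}\, ds = t\, \| G \|_{L^\infty_t L^p_x}$, which is at most $T \| G \|_{L^\infty_t L^p_x}$ after the supremum over $t$.

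The estimate \eqref{nlest2} is the one requiring more care, since we control only $\| G \|_{L^\infty_t L^p_x}$ and not the norm of its derivative. The idea is to integrate by parts in $y$ to transfer the derivative from $G$ onto the kernel: because $\partial_y \Phi(x-y,t-s) = -\Phi_x(x-y,t-s)$ and the boundary terms vanish by the rapid decay of $\Phi$ and $\Phi_x$ in $y$, we obtain $\int_{\mathbb{R}} \Phi(x-y,t-s) G_x(y,s)\, dy = \int_{\mathbb{R}} \Phi_x(x-y,t-s) G(y,s)\, dy$. Proceeding as before with Minkowski and Young reduces matters to $\int_0^t \| \Phi_x(\cdot,t-s) \|_{L^1} \| G(\cdot,s) \|_{L^p}\, ds$, and the $p=1$ case of the second bound in Lemma \ref{heatkernestimates} gives $\| \Phi_x(\cdot,t-s) \|_{L^1} = C (t-s)^{-1/2}$ up to constants.

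The \emph{main obstacle} is then the singularity of this kernel norm at $s = t$. This singularity is integrable because the exponent $1/2$ is strictly less than $1$: the substitution $\sigma = t-s$ gives $\int_0^t (t-s)^{-1/2}\, ds = 2 t^{1/2}$, so the time integral converges and produces exactly the gain $C t^{1/2} \| G \|_{L^\infty_t L^p_x} \leq C T^{1/2} \| G \|_{L^\infty_t L^p_x}$. Taking the supremum over $t \in [0,T]$ establishes \eqref{nlest2}. The only points needing attention are the justification of the integration by parts and the convergence of this improper time integral, both of which are routine once the decay of the kernel and the value of the exponent are noted.
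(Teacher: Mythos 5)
Your proof is correct and takes essentially the same route as the paper: recognize each term as a spatial convolution, apply Minkowski's and Young's inequalities together with the kernel bounds of Lemma \ref{heatkernestimates}, and for \eqref{nlest2} transfer the derivative from $G$ onto the kernel and integrate the resulting $(t-s)^{-1/2}$ singularity in time. The only cosmetic difference is that the paper invokes ``standard properties of convolutions'' for the derivative transfer where you justify it explicitly by integration by parts, and the paper leaves the computation $\int_0^t (t-s)^{-1/2}\,ds = 2t^{1/2}$ implicit.
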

\begin{proof}
To begin the proof, we first observe that
\[
\int_{\mathbb{R}} \Phi(x-y,t) f(y)\ dy = \Phi(\cdot, t) * f (x),
\]
where $*$ denotes convolution in the spatial variable.  Thus
\[
\left\|\int_{\mathbb{R}} \Phi(x-y,t) f(y)\ dy\right\|_{L^{p}_{x}} = \left\|\Phi(\cdot, t) * f (x)\right\|_{L^{p}_{x}}.
\]
By Young's inequality, we have
\[
\left\|\Phi(\cdot, t) * f (x)\right\|_{L^{p}_{x}} \leq \| \Phi(\cdot, t) \|_{L^{1}_{x}} \| f \|_{L^{p}}.
\]
By Lemma \ref{heatkernestimates}, this reduces to
\[
\left\|\Phi(\cdot, t) * f (x)\right\|_{L^{p}_{x}} \leq \| f \|_{L^{p}}.
\]
The desired result follows by taking the $L^{\infty}$ norm in the time variable.

To prove equation \eqref{nlest1}, we first write
\[
\int_{0}^{t} \int_{\mathbb{R}} \Phi(x-y,t-s) G(y,s)\ dyds = \int_{0}^{t} \Phi(\cdot,t-s) * G(\cdot,s)(x)\ ds
\]
By Minkowski's inequality, Young's inequality and Lemma \ref{heatkernestimates}, we have
\[\begin{aligned}
& \left\| \int_{0}^{t} \int_{\mathbb{R}} \Phi(x-y,t-s) G(y,s)\ dyds \right\|_{L^{p}_{x}} \\
& \qquad \qquad \qquad \qquad  \leq  \int_{0}^{t} \| \Phi(\cdot,t-s)\|_{L^{1}_{x}} \| G(\cdot,s) \|_{L^{p}_{x}}\ ds \\
& \qquad \qquad \qquad \qquad  \leq  \int_{0}^{t} \| G(\cdot,s) \|_{L^{p}_{x}}\ ds \\
& \qquad \qquad \qquad \qquad \leq  t \| G \|_{L^{\infty}_{t}L^{p}_{x}}.
\end{aligned}\]
Taking the $L^{\infty}$ norm in time yields
\[
\left\| \int_{0}^{t} \int_{\mathbb{R}} \Phi(x-y,t-s) G(y,s)\ dyds \right\|_{L^{\infty}_{t}L^{p}_{x}} \leq T \| G \|_{L^{\infty}_{t} L^{p}_{x}},
\]
as desired.

For the proof of estimate \eqref{nlest2}, we observe that
\[\begin{aligned}
\int_{\mathbb{R}} \Phi(x-y,t-s) G_{x}(y,s)\ dy & = \Phi(\cdot,t-s) * G_{x}(\cdot,s)(x) \\
& = \Phi_{x}(\cdot,t-s) * G(\cdot,s)(x)
\end{aligned}\]
by standard properties of convolutions.  If we again apply Minkowski's inequality, Young's inequality, and Lemma \ref{heatkernestimates}, we obtain
\[
\left\| \int_{0}^{t}\int_{\mathbb{R}} \Phi(x-y,t-s) G_{x}(y,s)\ dy \right\|_{L^{p}_{x}} \leq C t^{1/2}\| G \|_{L^{\infty}_{t}L^{p}_{x}}.
\]
Taking the $L^{\infty}$ norm in time yields equation \eqref{nlest2}, as desired.
\end{proof}

%%%%%%%% Proof of Theorem 1 %%%%%%%%%%%%%%%%%%%%%%%%%%%%%%%%%%%%%%%%%%%

\section{Proof of Theorem \ref{mainthm1}}\label{proof1}

In this section, we consider equation \eqref{gennlprob} in the case 
\[
G(u,u_{x}) = u_{x} + F_1(u)
\] 
for some Lipschitz function $F_1$ with $F_1(0) = 0$.  By Lemma \ref{solest}, we have that \[\Psi: C([0,T];L^{p}(\mathbb{R})) \longrightarrow C([0,T];L^{p}(\mathbb{R})).\]  Thus, we need only prove an estimate of the form
\[
\| \Psi(u) - \Psi(v) \|_{L^{\infty}_{t}L^{p}_{x}} \leq C \| u - v \|_{L^{\infty}_{t}L^{p}_{x}}
\]
for some constant $0 < C < 1$.  To obtain such an estimate, we first observe that, for fixed $f \in L^{p}$, we have
\[\begin{aligned}
\Psi(u) - \Psi(v) & = \int_0^t \int_{\mathbb{R}} \Phi(x-y,t-s)[u_{x}(y,s)- v_{x}(y,s)]dyds \\
& + \int_0^t \int_{\mathbb{R}} \Phi(x-y,t-s)[F_{1}(u(y,s))-F_{1}(v(y,s))]dyds.
\end{aligned}\]
If we apply the estimate in equation \eqref{nlest2}, we see that
\[\begin{aligned}
& \left\| \int_0^t \int_{\mathbb{R}} \Phi(x-y,t-s)[u_{x}(y,s)- v_{x}(y,s)]dyds \right\|_{L^{\infty}_{t} L^{p}_{x}} \\
& \qquad \qquad \qquad \qquad \qquad \qquad \qquad \qquad \leq C T^{1/2} \| u - v \|_{L^{\infty}_{t} L^{p}_{x}}.
\end{aligned}\]
If we apply the estimate in equation \eqref{nlest1}, it is easy to see that
\[\begin{aligned}
& \left\| \int_0^t \int_{\mathbb{R}} \Phi(x-y,t-s)[F_{1}(u(y,s))-F_{1}(v(y,s))]dyds \right\|_{L^{\infty}_{t}L^{p}_{x}} \\
& \qquad \qquad \qquad \qquad \qquad \qquad \qquad \qquad \leq C T \| F_{1}(u) - F_{1}(v) \|_{L^{\infty}_{t} L^{p}_{x}}
\end{aligned}\]
Since $F_{1}$ is Lipschitz, it follows that
\[
\| F_{1}(u) - F_{1}(v) \|_{L^{\infty}_{t} L^{p}_{x}} \leq C \| u - v \|_{L^{\infty}_{t} L^{p}_{x}}
\]
Putting all this together, we have shown that
\[
\| \Psi(u) - \Psi(v) \|_{L^{\infty}_{t}L^{p}_{x}} \leq C(T+T^{1/2})\| u - v \|_{L^{\infty}_{t}L^{p}_{x}}
\]
for some constant $C > 0$.  If we choose $T$ small enough so that
\[
C(T+T^{1/2}) < 1,
\]
we will then have that $\Psi$ is a contraction on $C([0,T];L^{p}(\mathbb{R}))$.

Next, we prove that the solution map $f \mapsto u$ is continuous.  Let $f, g \in L^{p}$, and let $u_{f}$ and $u_{g}$ be the corresponding fixed points.  By applying estimates \eqref{linest}, \eqref{nlest1}, and \eqref{nlest2}, we may obtain
\[\begin{aligned}
\| u_{f} - u_{g} \|_{L^{\infty}_{t}L^{p}_{x}} & = \| \Psi(u_{f}) - \Psi(u_{g}) \|_{L^{\infty}_{t}L^{p}_{x}} \\
& \leq \| f - g \|_{L^{p}_{x}} + C ( T + T^{1/2} ) \| u_{f} - u_{g} \|_{L^{\infty}_{t}L^{p}_{x}},
\end{aligned}\]
which implies that
\[
\| u_{f} - u_{g} \|_{L^{\infty}_{t} L^{p}_{x}} \leq \frac{1}{1 - C( T + T^{1/2} )} \| f - g \|_{L^{p}_{x}}.
\]
It follows that, for $T$ as chosen above, the solution map is continuous.  In fact, the computation above implies the solution map is Lipschitz.

%%%%%%%% Proof of Theorem 2 %%%%%%%%%%%%%%%%%%%%%%%%%%%%%%%%%%%%%%%%%%%

\section{Proof of Theorem \ref{mainthm2}}\label{proof2}
We now consider equation \eqref{gennlprob} for
\[
G(u,u_{x}) = u + (F_{2}(u))_{x},
\]
where $F_{2}$ is a differentiable Lipschitz function.  As before, it suffices to prove an estimate of the form
\[
\| \Psi(u) - \Psi(v) \|_{L^{\infty}_{t} L^{p}_{x}} \leq C \| u - v \|_{L^{\infty}_{t} L^{p}_{x}}
\]
for some $0 < C < 1$.  Proceeding exactly as we did in the proof of Theorem \ref{mainthm1}, we first observe that equation \eqref{nlest1} implies that
\[
\left\| \int_0^t \int_{\mathbb{R}} \Phi(x-y,t-s)[u(y,s)- v(y,s)]dyds \right\|_{L^{\infty}_{t} L^{p}_{x}}  \leq C T \| u - v \|_{L^{\infty}_{t} L^{p}_{x}}.
\]
Similarly, estimate \eqref{nlest2} implies that
\[\begin{aligned}
& \left\| \int_0^t \int_{\mathbb{R}} \Phi(x-y,t-s)[(F_{2}(u(y,s))-F_{2}(v(y,s)))_{x}]dyds \right\|_{L^{\infty}_{t}L^{p}_{x}} \\
& \qquad \qquad \qquad \qquad \qquad \qquad \qquad \qquad \leq C T^{1/2} \| F_{2}(u) - F_{2}(v) \|_{L^{\infty}_{t} L^{p}_{x}}.
\end{aligned}\]
Applying the assumption that $F_{2}$ is Lipschitz, we see that
\[
\| F_{2}(u) - F_{2}(v) \|_{L^{\infty}_{t} L^{p}_{x}} \leq \| u - v \|_{L^{\infty}_{t} L^{p}_{x}}.
\]
Combining these facts together, we have that
\[
\| \Psi(u) - \Psi(v) \|_{L^{\infty}_{t} L^{p}_{x}} \leq C(T + T^{1/2}) \| u - v \|_{L^{\infty}_{t} L^{p}_{x}}.
\]
If we again choose $T$ so that
\[
C(T+T^{1/2}) < 1,
\]
then we again have that $\Psi$ is a contraction.  The existence of a unique fixed point then follows from the Banach fixed point theorem.  The proof that the solution map is Lipschitz (and therefore continuous) is analogous to that of Theorem \ref{mainthm1}, and so we will be omitted here.

%%%%%%%% Bibliography %%%%%%%%%%%%%%%%%%%%%%%%%%%%%%%%%%%%%%%%%%%%%%%%%

\bibliographystyle{amsplain}
\bibliography{database}

\end{document}